\theoremstyle{plain}
\newtheorem{theorem}{Theorem}[section]
\newtheorem{proposition}[theorem]{Proposition}
\newtheorem{lemma}[theorem]{Lemma}
\theoremstyle{definition}
\newcommand{\cM}{\mathcal{M}}
\newcommand{\cR}{\mathcal{R}}
\newcommand{\dtv}{d_{\mathrm{TV}}}
\newcommand{\Po}{\mathrm{Po}}
\newcommand{\e}{\mathrm{e}}
\newcommand{\GW}{\mathrm{GW}}
\title{Universality of the matching number in percolated regular graphs}
\author[1]{Sahar Diskin}
\author[2]{Mihyun Kang}
\author[3]{Lyuben Lichev}
\affil[1]{School of Mathematical Sciences, Tel Aviv University, 6997801 Tel Aviv, Israel}
\affil[2]{Institute of Discrete Mathematics, Graz University of Technology, 8010 Graz, Austria}
\affil[3]{Institute of Statistics and Mathematical Methods in Economics, Technical University of Vienna, A-1040 Vienna, Austria}
\date{\today}
\begin{document}

\maketitle

\begin{abstract}
Fix a sequence of $d$-regular graphs $(G_d)_{d\in \mathbb{N}}$ and denote by $G_{d,p}$ the graph obtained from $G_d$ after edge-percolation with probability $p=c/d$, for a constant $c>0$. We prove a quantitative local convergence of $(G_{d,p})_{d\in \mathbb{N}}$.
In combination with results of Bordenave, Lelarge and Salez, it implies that the rescaled matching number of $G_{d,p}$ is asymptotically equivalent to that of the binomial random graph $G(n,c/n)$.
\end{abstract}

\section{Introduction} 
A \textit{matching} in a graph $G$ is a collection of pairwise non-adjacent edges $M\subseteq E(G)$. 
Denote by $\cM(G)$ the set of all matchings in $G$, and define the \textit{matching number} of $G$ by
\begin{align*}
    \nu(G)~:=~\max_{M\in\cM(G)}|M|.
\end{align*}
A graph is said to have a \textit{perfect matching} if it has a matching $M$ incident to all vertices of $G$; equivalently, when $2\nu(G)=|V(G)|$. 

The \textit{binomial random graph} $G(n,p)$ is formed by retaining each edge of the complete graph on $n$ vertices $K_n$ independently with probability $p$. When $(n-1)p=c$ for some constant $c>0$, $G(n,p)$ typically contains isolated vertices, and thus cannot have a perfect matching. Nonetheless, determining the asymptotic value of the rescaled matching number $\nu(G(n,p))/n$ is not a trivial task. For a constant $c>0$, let $y=y(c)$ be the smallest solution in $(0,1)$ of
\begin{align}
    y~=~\exp\left(-c\exp(-cy)\right), \label{eq: definition of y}
\end{align}
and consider a function 
\begin{align}
    F(c) ~:=~ 1-\frac{y+\exp(-cy)+cy\exp(-cy)}{2}.\label{eq: definition of F}
\end{align}
We note that $\lim_{c\to \infty} F(c)=1/2$. 
In a seminal work, Karp and Sipser \cite{KS81} showed that
\begin{align*}
    \frac{\nu\left(G(n,c/n)\right)}{n}\quad\xrightarrow[n\to\infty]{}\quad(1+o(1))~F(c),
\end{align*}
where the convergence holds in probability. 
At the heart of the proof of this result lies the so-called \textit{Karp-Sipser algorithm}. It is based on the observation that any vertex of degree one, called a leaf, participates in at least one maximum matching.
The algorithm applies an iterative deletion process of leaves and their unique neighbours, ending up with the \emph{Karp-Sipser core} of the graph. 
The careful analysis of the Karp-Sipser algorithm  provides precise asymptotic expressions for the rescaled matching number and the rank of the adjacency matrix of random graphs~\cite{AFP98,BF11,BC24,BCC22, COparity23,COrank23,GKSS24,Kre17}. 

The binomial random graph $G(n,p)$ is one instance of \textit{edge-percolation}.
Given a host graph $G$ and a probability $p\in[0,1]$, we form the random subgraph $G_p\subseteq G$ by retaining each edge of $G$ independently and with probability $p$. (In particular, $G(n,p)$ is obtained by percolating the edges of the complete graph $K_n$ with probability $p$.) Another well-studied instance of edge-percolation is the \textit{percolated hypercube} $Q^d_p$. The $d$-dimensional binary hypercube $Q^d$ is the graph with vertex set $\{0,1\}^d$ where $uv\in E(Q^d)$ if and only if $u$ and $v$ differ in a single coordinate. 
Similarly to the binomial random graph, in the $p$-percolated hypercube, a giant component suddenly arises around the point $p=1/d$ (see \cite[Chapter 13]{FK16}). In this note, we extend this analogy to the matching number of $Q^d_p$.

A simple argument shows that $\nu(Q^d_{c/d})/2^d$ tends to $1/2$ as $c$ tends to infinity (see \cite[Theorem~1]{DEKK25}). 
Yet, obtaining a precise result similar to that of Karp and Sipser \cite{KS81} is not immediate: indeed, in their proof, the underlying analysis based on differential equations uses the homogeneity of the complete graph. 
In particular, the high-dimensional lattice-like geometry of $Q^d$ invalidates this approach. 

Our main result establishes an analogue of the Karp-Sipser result for the percolated $d$-dimensional hypercube and, in fact, for any percolated $d$-regular graph with growing degree.
\begin{theorem}\label{thm:main}
Fix a constant $c>0$ and a sequence of $d$-regular graphs $(G_d)_{d\in \mathbb{N}}$. Let $p = p(d) = c/d$. Then,
\begin{align*}
    {\frac{\nu\left((G_d)_p\right)}{|V(G_d)|}}\quad\xrightarrow[d\to \infty]{}\quad  F(c) \hspace{10ex} \textit{in probability,}
\end{align*}
where $F(c)$ is the same as in \eqref{eq: definition of F}.
\end{theorem}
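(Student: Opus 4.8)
The plan is to deduce \Cref{thm:main} from the principle, established by Bordenave, Lelarge and Salez, that the rescaled matching number $\nu(G_n)/|V(G_n)|$ is a continuous functional of the local weak (Benjamini--Schramm) limit of a graph sequence, provided the limit is unimodular and the root degree is uniformly integrable. The strategy is therefore: (i) identify the local weak limit of $(G_d)_p$; (ii) observe that it coincides with that of $G(n,c/n)$; and (iii) read off the common value from the Karp--Sipser theorem. The candidate limit is the \emph{Poisson--Galton--Watson tree} $\GW(\Po(c))$, the random rooted tree in which the root and every other vertex independently have $\Po(c)$ offspring. Since it is classical that $G(n,c/n)\to\GW(\Po(c))$ locally, and since $\nu(G(n,c/n))/n\to F(c)$ by \cite{KS81}, it suffices to prove that $(G_d)_p$ also converges locally to $\GW(\Po(c))$, with enough quantitative control to feed the Bordenave--Lelarge--Salez machinery.

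The heart of the argument is this local convergence, which I would establish by a breadth-first exploration from a uniformly random root $V$ of $(G_d)_p$. The $d$-regularity of $G_d$ makes every generation behave like a $\Po(c)$ branching step: the root keeps each of its $d$ edges with probability $c/d$, so its degree is $\mathrm{Bin}(d,c/d)$, while a vertex reached along an explored edge has $d-1$ further potential edges, each surviving with probability $c/d$, so it spawns $\mathrm{Bin}(d-1,c/d)$ new vertices. Both Binomials converge in total variation to $\Po(c)$ at rate $O(1/d)$ by a Le Cam (Stein--Chen) Poisson-approximation bound, and crucially the limiting offspring law is the same $\Po(c)$ at the root and at every descendant --- exactly the structure of $\GW(\Po(c))$. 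Summing the per-vertex discrepancies over the $O_r(1)$ vertices explored up to a fixed depth $r$ yields a quantitative estimate $\mathbb{E}\,\dtv\big(B_r((G_d)_p,V),\,B_r(\GW(\Po(c)),o)\big)=o(1)$ as $d\to\infty$.

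The main obstacle --- and the reason the statement is genuinely uniform over all $d$-regular hosts, including the cycle-rich hypercube --- is controlling the event that the explored ball fails to be a tree, i.e.\ that percolation leaves a back-edge to an already-visited vertex. I would handle this by a first-moment estimate carried out along the exploration itself, which avoids any appeal to the geometry of $G_d$. When a vertex $u$ is processed against the current explored set $S$, at most $|S|$ of its $G_d$-neighbours lie in $S$, and each such potential edge survives with probability $c/d$; hence the expected number of back-edges created at this step is at most $|S|\,c/d$. Summing over the exploration bounds the expected number of back-edges by $\mathbb{E}[\,|\text{explored set}|^2\,]\cdot c/d$. Since the exploration is stochastically dominated by a branching process of mean $c$ run for $r$ generations, the explored set has $O_r(1)$ expected square, so the expected number of back-edges is $O_r(1/d)\to 0$. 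Thus $B_r((G_d)_p,V)$ is a tree with probability $1-o(1)$, and conditionally on being a tree the exploration coincides, up to the $O(1/d)$ Poissonisation error, with that of $\GW(\Po(c))$, closing the total-variation bound.

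Finally I would assemble the pieces. One checks the Bordenave--Lelarge--Salez hypotheses: the root degree $\mathrm{Bin}(d,c/d)$ has mean $c$ and is uniformly integrable, and $\GW(\Po(c))$ is a unimodular random tree; hence $\mathbb{E}\,\nu((G_d)_p)/|V(G_d)|$ converges to the value of their functional at $\GW(\Po(c))$. As $(G_d)_p$ and $G(n,c/n)$ share this local limit, they share this value, which is $F(c)$ by Karp--Sipser. To upgrade convergence of the mean to convergence in probability I would invoke a bounded-difference (Azuma) argument: exposing the vertices of $G_d$ one at a time together with their surviving edges changes $\nu$ by at most one per step, so $\nu((G_d)_p)$ is concentrated within $O(\sqrt{|V(G_d)|})$ of its mean, which is $o(|V(G_d)|)$ since $|V(G_d)|\ge d+1\to\infty$. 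The two delicate points I would watch are keeping the back-edge estimate uniform in the host graph --- using only $d$-regularity and the Poisson domination of the exploration size, never the structure of $G_d$ --- and verifying that the Bordenave--Lelarge--Salez functional is continuous in exactly the (quantitative, uniformly integrable) mode of local convergence we establish.
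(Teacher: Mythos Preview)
Your proposal follows the same architecture as the paper: establish that $(G_d)_p$ converges locally to the Poisson Galton--Watson tree $\GW_c$ via a breadth-first exploration coupled to the branching process (Poisson approximation for each generation, first-moment control of back-edges using only $d$-regularity), and then feed this into the Bordenave--Lelarge--Salez machinery. The coupling and cycle-control you sketch match the paper's Proposition~\ref{prop:weak_conv} in all essentials.

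The one organisational difference is where concentration enters. The paper first upgrades the annealed bound $\dtv(B^r_{G_{d,p}}[u],\GW^r_c)=o(1)$ to \emph{local convergence in probability} by applying a bounded-difference inequality to the neighbourhood-type counts $N_{r,T}$ (Lemma~\ref{lem:conc} and Theorem~\ref{thm:local_limit}), and only then invokes BLS in the form of Theorem~\ref{thm:BLS}, which already delivers $\nu/n\to F(c)$ in probability. You instead propose to read off $\mathbb{E}\,\nu/n\to F(c)$ from the annealed local limit and then run Azuma on $\nu$ itself via vertex exposure. This is legitimate, but it leans on the fact that the BLS limit is obtained as a limit of \emph{linear} (local) functionals of the empirical $r$-ball distribution, so that expectation over the percolation randomness commutes through; that is a slightly finer reading of~\cite{BLS13} than the black-box hypothesis ``local convergence in probability $\Rightarrow$ $\nu/n\to F(c)$'' and is exactly the point you flag at the end. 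The paper's ordering matches the cited hypothesis more transparently and costs only the extra concentration lemma; your ordering is a touch more economical once that linearity is granted.
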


The main part of the proof of \Cref{thm:main} is based on showing that percolated regular graphs with growing degree  converge locally in probability to a Galton-Watson branching process with offspring distribution $\Po(c)$ (see Theorem~\ref{thm:local_limit}) which, together with a result of Bordenave, Lelarge, and Salez~\cite{BLS13} (Theorem~\ref{thm:BLS}), implies Theorem~\ref{thm:main}.

Theorem \ref{thm:main} contributes to an ongoing line of research, dedicated to the study of universality of binomial random graphs. Previous research has mainly studied the typical emergence of the giant component, its uniqueness, and asymptotic properties \cite{CDE24, DEKK23, DEKK24, L22}.
One significant feature distinguishing the above setting from our analysis of the matching number is that the minimal assumptions of regularity and growing degree in Theorem \ref{thm:main} are insufficient to uniquely identify the size of the giant component.
For example, no giant component can appear as a subgraph of a disjoint union of cliques of order $d+1$. In fact, the emergence of a giant component -- and the universality of this phenomenon -- are tightly connected to the expansion properties of the host graph. 
In contrast, our results show that the universality class for the matching number is much broader and contains \textit{all} regular graphs of growing degree. Finally, note that the assumption that $d$ is growing is \textit{necessary} for our result to hold in such generality, as percolated regular graphs of bounded degree have a different local limit.

This note is structured as follows. In Section \ref{sec:notation}, we introduce some notation and terminology. In Section~\ref{sec:3}, we prove a quantitative strengthening of Theorem \ref{thm:main}. Finally, in Section \ref{sec:conc}, we briefly discuss extensions of Theorem~\ref{thm:main} to even more general settings.

\section{Notation and terminology}\label{sec:notation}
In this note, $\mathbb N$ denotes the set of positive integers. 
For $0<a<b$, we write $[a,b]=\{a,a+1,\ldots,b\}$ instead of $\{a,a+1,\ldots,b\}\cap \mathbb N$ for brevity, and we also write $[k]=\{1,2,\ldots,k\}$  for all $k\in \mathbb{N}$. All logarithms in this paper have base $\e$. 
Given a graph $G$ and a set $U\subseteq V(G)$, we denote by $\Delta(G)$ the maximum degree of $G$ and by $G[U]$ the subgraph of $G$ induced by $U$.

Let us further recall several variants of the notion of local convergence of a sequence of graphs. For a more detailed account on the topic, see~\cite{Sal11} and Sections~2.3 and~2.4 in~\cite{Hof24}. 

Given a graph $H$, a vertex $v$ in $H$ and an integer $r\ge 0$, the \emph{$r$-th (closed) neighbourhood of $v$ in $G$}, denoted $N^r_H[v]$, is the set of vertices at (graph-)distance at most $r$ from $v$ in $H$.
Moreover, the graph induced by these vertices is called the \emph{ball with radius $r$ around $v$ in $H$} and denoted by $B^r_H[v]$.
We often see $B^r_H[v]$ as a graph rooted at $v$. Given a graph $H$ and a vertex $v$, we denote by $C_H(v)$ the connected component of $H$ containing $v$. Then, with a slight abuse of notation, we think of the rooted graph $(H,v)$ as the rooted (connected) graph $(C_H(v),v)$.
We can then denote the set of finite rooted graphs by $\cR$ and equip it with the natural distance
\[d\left((H_1,v_1), (H_2,v_2)\right) ~:=~ \frac{1}{1+\sup\{r\in \mathbb N: B^r_{H_1}[v_1]\simeq B^r_{H_2}[v_2]\}},\]
where $d((H_1,v_1),(H_2,v_2))=1$ means that the connected components of $v_1$ in $H_1$ and of $v_2$ in $H_2$ are isomorphic as rooted graphs.

Fix a sequence of (deterministic or random) finite rooted graphs $(H_n,u_n)_{n\ge 1}$, where $u_n$ is a vertex of $H_n$ chosen uniformly at random.
Given a probability measure $\mu$ on the set $\cR$, we say that $(H_n,u_n)_{n\ge 1}$ \emph{converges locally in probability} to $\mu$ if, for every $r\in\mathbb{N}$ and every finite rooted graph~$(\bar{H},\bar{u})$,
\[\mathbb P(B^r_{H_n}[u_n] \simeq (\bar{H},\bar{u})\mid (H_n,u_n))\quad\xrightarrow{}\quad \mu(\{(H,u)\in \cR: B^r_H[u] \simeq (\bar{H}, \bar{u})\})  \hspace{10ex} \text{in probability},\]
or equivalently if the sequence of random variables $\mathbb E[h(H_n,u_n)\mid (H_n,u_n)]$ converges in probability to the integral of $h$ with respect to $\mu$ for every bounded and continuous function $h: \cR\to \mathbb R$.
The limiting measure $\mu$ is often identified with a (possibly random) rooted graph.

In the sequel, we will provide a stronger quantitative version of local convergence in terms of the \emph{total variation distance} defined as follows: for two probability distributions $\mu_1$ and $\mu_2$ defined on a common probability space $\Omega$, the total variation distance between $\mu_1$ and $\mu_2$ is defined as 
\begin{equation*}
\begin{split}
     \dtv(\mu_1,\mu_2)
     &~:=~ \sup_{A} |\mu_1(A)-\mu_2(A)|\\
     &~=~\inf\{\sup_{A}\mathbb P(X\in A, Y\notin A): (X,Y)\text{ coupling of }(\mu_1,\mu_2)\},
 \end{split}
\end{equation*} 
where the suprema are taken over a collection of measurable events generating the underlying $\sigma$-algebra (in many cases, the cylindric events). Equivalently, when $\Omega$ is a finite or countable space, we have
\begin{equation}\label{eq:dTV}
\dtv(\mu_1,\mu_2) ~=~ \frac{1}{2}\sum_{\omega\in \Omega} |\mu_1(\omega) - \mu_2(\omega)|.
\end{equation}

\section{\texorpdfstring{Proof of Theorem \ref{thm:main}}{}}\label{sec:3}

Throughout this note, we fix a constant $c>0$, set $p = p(d) := c/d$, and let $(G_d)_{d\in \mathbb{N}}$ be a sequence of $d$-regular graphs as in Theorem \ref{thm:main}. 
Percolating the edges of the graph $G_d$ with probability $p$ gives rise to a spanning random subgraph $G_{d,p}=(G_d)_p$ of $G_d$.
For every $r\in\mathbb{N}$, we define a random measure $\mu_{r,d}$ so that, for every $(H,u)\in \cR$, conditionally on $G_{d,p}$, 
\[\mu_{r,d}(H,u) ~:=~ \frac{1}{|V(G_{d,p})|}\ \big|\{v\in G_{d,p}: B^r_{G_{d,p}}[v]\simeq (H,u)\}\big|.\]
Denote by $\GW_c$ the Galton-Watson tree rooted at a vertex $o$ and with offspring distribution $\Po(c)$. For every $r\in\mathbb{N}$ and $(H,u)\in \cR$, let $\mu_r$ be the distribution of $B^r_{\GW_c}[o]$. In other words,  
\[\mu_{r}(H,u) ~:=~ \mathbb P\left( B^r_{\GW_c}[o] \simeq (H, u) \right).\]

The following theorem is the central technical result in this note.

\begin{theorem}\label{thm:local_limit}
For every $r\in\mathbb{N}$ and every sufficiently large $d\in\mathbb{N}$,
\[\mathbb P\left(\dtv(\mu_{r,d}, \mu_r)\ge \exp\left(-\frac{1}{4}(\log d)^{1/2r}\right)\right) =o(d^{-1/3}).\] 
In particular, $(\mu_{r,d})_{d\in \mathbb{N}}$ converges locally in probability to $\mu_r$.
\end{theorem}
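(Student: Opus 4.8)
The plan is to route the comparison through the averaged, deterministic measure $\bar\mu_{r,d} := \mathbb E[\mu_{r,d}]$ and use the triangle inequality
\[\dtv(\mu_{r,d},\mu_r)~\le~\dtv(\bar\mu_{r,d},\mu_r) ~+~ \dtv(\mu_{r,d},\bar\mu_{r,d}),\]
bounding the deterministic \emph{bias} $\dtv(\bar\mu_{r,d},\mu_r)$ and the random \emph{fluctuation} $\dtv(\mu_{r,d},\bar\mu_{r,d})$ separately. For the bias I would fix a vertex $v$ and couple a breadth-first exploration of $B^r_{G_{d,p}}[v]$ with the construction of $B^r_{\GW_c}[o]$. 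At each explored vertex the number of freshly discovered neighbours is binomial with at most $d$ trials and success probability $c/d$, which I couple to a $\Po(c)$ offspring by the optimal coupling; since $\dtv(\mathrm{Bin}(d,c/d),\Po(c)) = O(1/d)$ by a Le Cam estimate and the exploration reveals at most $K := |B^r_{\GW_c}[o]|$ vertices, the coupling fails (through this discrepancy, through depletion of available neighbours, or through the creation of a short cycle) with conditional probability $O(K^2/d)$. As $\mathbb E[K^2] = O_{c,r}(1)$, this gives $\dtv(\text{law of }B^r_{G_{d,p}}[v],\mu_r) = O_{c,r}(1/d)$ uniformly in $v$, and averaging over $v$ yields $\dtv(\bar\mu_{r,d},\mu_r) = O_{c,r}(1/d)$, far below the target.

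Two ingredients drive the fluctuation term. The first is a stretched-exponential tail bound: since every vertex has at most $d$ neighbours, the exploration of $B^r_{G_{d,p}}[v]$ is stochastically dominated by the $r$-generation total progeny of a Galton-Watson tree with $\mathrm{Bin}(d,c/d)$ offspring, whose generating function is bounded by that of $\Po(c)$; a recursion on the number of generations (splitting the root's offspring at the threshold $M^{1/\ell}$ at generation $\ell$) then gives $\mathbb P(|B^r_{G_{d,p}}[v]|\ge S)\le\exp(-\gamma_r S^{1/r})$ for a constant $\gamma_r>0$, uniformly in $v$ and in the host graph. The second is a variance bound: writing $N_H$ for the number of vertices whose $r$-ball is isomorphic to $(H,u)$ and $X_v$ for the associated indicators, the covariance $\mathrm{Cov}(X_u,X_v)$ vanishes unless $u,v$ are joined by a surviving path of length at most $2r+2$, so $\mathrm{Var}(N_H)\le\sum_u\mathbb E|B^{2r+2}_{G_{d,p}}[u]| = O_{c,r}(N)$, whence the $H$-uniform bound $\mathrm{Var}(\mu_{r,d}(H,u))\le O_{c,r}(1)/N\le O_{c,r}(1/d)$. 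It is crucial that the governing quantity is the expected size of a \emph{percolated} ball (of order $c^{O(r)}$) rather than that of the deterministic $G_d$-ball (of order $d^{O(r)}$); this is exactly what makes the estimate survive for host graphs as small as $K_{d+1}$, where $|V(G_d)| = d+1$.

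To assemble the fluctuation term set $S := A(\log d)^{1/2}$ for a large constant $A = A(c,r)$, write $\eps_d := \exp\!\left(-\tfrac14(\log d)^{1/2r}\right)$, and split $\dtv(\mu_{r,d},\bar\mu_{r,d}) = \tfrac12\sum_{(H,u)}|\mu_{r,d}(H,u) - \bar\mu_{r,d}(H,u)|$ into three ranges. Balls that are not trees carry total mass of mean $O_{c,r}(1/d)$, so Markov bounds the probability that their contribution exceeds $\eps_d/3$ by $O_{c,r}(1/d)/\eps_d = o(d^{-1/3})$. Tree-balls of size larger than $S$ contribute at most $\mu_{r,d}(|B^r|>S) + \bar\mu_{r,d}(|B^r|>S)$; the deterministic part is $\le\exp(-\gamma_r S^{1/r})\le\eps_d/6$ once $A$ is large, while the random part has the same mean and, by the covariance argument together with Cauchy--Schwarz and the finiteness of $\mathbb E|B^{2r+2}_{G_{d,p}}[u]|^2 = O_{c,r}(1)$, variance $O_{c,r}(1)\exp(-\gamma_r S^{1/r}/2)/N$, so Chebyshev gives failure probability $o(d^{-1/3})$. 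Finally there are at most $4^S = d^{o(1)}$ rooted trees of size at most $S$, and applying Markov to $\mathbb E\sum_{|H|\le S}|\mu_{r,d}(H,u)-\bar\mu_{r,d}(H,u)|\le 4^S\sqrt{O_{c,r}(1/N)} = d^{-1/2+o(1)}$ shows their contribution exceeds $\eps_d/3$ with probability $d^{-1/2+o(1)} = o(d^{-1/3})$. A union bound over these (and the bias) events completes the argument.

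The step I expect to require the most care is the covariance estimate, namely showing that $\mathrm{Cov}(X_u,X_v)$ is controlled by the probability of a \emph{surviving} short path (order $c^{O(r)}$) rather than by the mere existence of a short path in $G_d$ (order $d^{O(r)}$). This needs an exploration argument establishing that, conditionally on the two percolated balls not being joined by a surviving path, the events $\{B^r_{G_{d,p}}[u]\simeq(H,u)\}$ and $\{B^r_{G_{d,p}}[v]\simeq(H,u)\}$ depend on disjoint sets of edges and are therefore independent. Matching the precise threshold then reduces to balancing the stretched-exponential tail, with its generation-dependent exponent $1/r$, against the choice $S\asymp(\log d)^{1/2}$; the polynomially small target $o(d^{-1/3})$ is met comfortably because every variance bound carries the factor $1/|V(G_d)|\le 1/(d+1)$.
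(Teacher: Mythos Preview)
Your overall architecture—splitting into a bias term handled by a BFS coupling and a fluctuation term handled by a second-moment method—is a legitimate alternative to the paper's, but the step you yourself flag as delicate does not go through as written. You assert that, conditionally on the absence of a surviving $u$–$v$ path of length at most $2r+2$, the events $\{B^r_{G_{d,p}}[u]\simeq(H,u)\}$ and $\{B^r_{G_{d,p}}[v]\simeq(H,u)\}$ depend on disjoint edge sets and are therefore independent. They are not: determining $B^r_{G_{d,p}}[u]$ forces you to reveal the status of every $G_d$-edge incident to $N^{r-1}_{G_{d,p}}[u]$, including the \emph{closed} ones, and a closed edge with one endpoint in $N^{r-1}_{G_{d,p}}[u]$ and the other in $N^{r-1}_{G_{d,p}}[v]$ is queried by both explorations without producing any surviving path. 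In $K_{d+1}$ such shared closed edges occur with probability $\Theta(1)$, so ``the two explorations query a common edge'' is not a rare event. Concretely, for $r=1$ and $H$ the isolated vertex in $K_{d+1}$, one checks $\mathbb P(X_u=X_v=1\mid K^c)\neq \mathbb P(X_u=1\mid K^c)\,\mathbb P(X_v=1\mid K^c)$ unless $\mathbb P(K^c)=1-p$, which it is not; the shared edge $uv$ is the culprit.

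The bound $\mathrm{Var}(N_H)=O_{c,r}(N)$ you need is nevertheless true, and the clean way to get it is Efron--Stein: flipping a single edge $e=ab$ can alter $X_w$ only for $w\in N^r_{G_{d,p}\cup\{e\}}[a]\cup N^r_{G_{d,p}\cup\{e\}}[b]$, a set whose second moment is $O_{c,r}(1)$ by domination by a depth-$r$ Galton--Watson tree; summing over the $m=Nd/2$ edges with the factor $2p(1-p)$ for an actual flip gives $\mathrm{Var}(N_H)\le O_{c,r}(1)\cdot pm=O_{c,r}(N)$, uniformly in $H$. With this fix your Chebyshev/Markov assembly goes through and yields the stated $o(d^{-1/3})$.

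By contrast, the paper avoids second moments altogether. It restricts to target trees $T$ of maximum degree at most $\gamma=(\log d)^{1/(2r)}$ (roughly equivalent to your size cutoff $S\asymp(\log d)^{1/2}$) and observes that flipping one edge changes $N_{r,T}$ by at most $2\gamma^r$ \emph{deterministically}: any $w$ with $e\subseteq B^r_{G_{d,p}}[w]\simeq T$ is reachable from an endpoint of $e$ by a path of length at most $r-1$ through vertices of $G_{d,p}$-degree at most $\gamma$. Plugging this Lipschitz constant into a McDiarmid-type bounded-differences inequality gives exponential concentration and a failure probability $\exp(-d^{1/3+o(1)})$, much stronger than the polynomial bound your route produces. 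Your approach, once repaired, is more elementary and still meets the stated threshold; the paper's buys a far better tail.
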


\subsection{Coupling estimates}

Before diving into the proof of Theorem~\ref{thm:local_limit}, we provide a couple of probabilistic bounds. The first lemma is a simple consequence of Chernoff's bound, which will be of use throughout the note.

\begin{lemma}\label{lem:Chernoff}
For every sufficiently large $d\in\mathbb{N}$, every $d'\in[d-d^{1/4},d]$ and every $t \ge 10c$,
\[\mathbb P(\mathrm{Bin}(d',p)\ge t) \le \e^{-t/3}\quad \text{and}\quad \mathbb P(\Po(c)\ge t)  \le \e^{-t/3}.\]
\end{lemma}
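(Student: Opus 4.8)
The plan is to apply the standard exponential (Markov) form of Chernoff's bound to both random variables at once, exploiting the fact that only an upper bound on the mean is required and that the binomial moment generating function is dominated by the Poisson one. For any random variable $X$ and any $s>0$ we have $\mathbb{P}(X\ge t)\le \e^{-st}\,\mathbb{E}[\e^{sX}]$. For $X\sim \mathrm{Bin}(d',p)$ one computes $\mathbb{E}[\e^{sX}]=(1-p+p\e^{s})^{d'}$, and the elementary inequality $1+x\le \e^{x}$ gives $\mathbb{E}[\e^{sX}]\le \exp\!\big(d'p(\e^{s}-1)\big)$. For $X\sim \Po(c)$ the moment generating function is exactly $\exp\!\big(c(\e^{s}-1)\big)$. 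Thus both cases are governed by a common bound of the form $\exp\!\big(-st+\lambda(\e^{s}-1)\big)$ with mean parameter $\lambda$. Since $d'\le d$ yields $d'p=d'c/d\le c$ for the binomial case, and $\lambda=c$ exactly for the Poisson case, in both instances $\lambda\le c$; as $\e^{s}-1>0$, the bound is monotone increasing in $\lambda$, so I may replace $\lambda$ by $c$ throughout. (Only $d'\le d$ is used here; the lower bound $d'\ge d-d^{1/4}$ plays no role in this particular estimate and is kept only for consistency with later applications.)

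With this reduction in hand, I would simply take $s=1$, which already suffices and avoids any optimization. This gives, for both distributions,
\[\mathbb{P}(X\ge t)\le \exp\!\big(-t+c(\e-1)\big).\]
It remains to check that $-t+c(\e-1)\le -t/3$, i.e. that $c(\e-1)\le \tfrac{2}{3}t$, i.e. that $t\ge \tfrac{3(\e-1)}{2}\,c$. Since $\tfrac{3(\e-1)}{2}<3<10$, the hypothesis $t\ge 10c$ comfortably implies this inequality, yielding $\mathbb{P}(X\ge t)\le \e^{-t/3}$ in both cases.

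There is no genuine obstacle here: the only points requiring a moment of care are the domination $1+x\le \e^{x}$ used to pass from the binomial MGF to the Poisson-type form, and the observation that $\lambda\le c$ handles both distributions uniformly. The claimed threshold $10c$ is far from tight, so there is ample slack in the final numerical verification, and the statement in fact holds for every $d$ without any largeness assumption.
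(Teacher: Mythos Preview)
Your proof is correct and is exactly the kind of argument the paper has in mind: the paper does not actually supply a proof of this lemma, merely noting that it is ``a simple consequence of Chernoff's bound,'' and your MGF computation with $s=1$ is the standard way to realise that consequence. Your observation that only $d'\le d$ is needed (so that $d'p\le c$), and that the largeness of $d$ and the lower bound on $d'$ play no role here, is accurate.
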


The following lemma quantifies the total variation distance between the binomial and the Poisson distributions for a suitable choice of parameters. We include its proof for the sake of completeness.

\begin{lemma}\label{lem:Bin vs Po}
For every sufficiently large $d\in \mathbb{N}$ and every $d'\in[d-d^{1/4},d]$,
\[\dtv\left(\mathrm{Bin}(d',p), \Po(c)\right) = O(d^{-1/2}).\]
Here, we artificially extend the support of $\mathrm{Bin}(d',p)$ to the set of non-negative integers by giving weight $0$ to each element of the set $\{d'+1,d'+2,\ldots\}$.
\end{lemma}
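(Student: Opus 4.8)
The plan is to interpose the Poisson law $\Po(d'p)$ and bound the two resulting distances separately, combining them via the triangle inequality
\[\dtv(\mathrm{Bin}(d',p),\Po(c)) ~\le~ \dtv(\mathrm{Bin}(d',p),\Po(d'p)) + \dtv(\Po(d'p),\Po(c)).\]
The point of this splitting is that the first term measures the error of Poissonising a sum of i.i.d.\ Bernoulli variables with the \emph{matching} mean $\lambda = d'p$, while the second term only accounts for the small drift of the mean from $d'p$ to $c$.

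For the first term I would invoke Le Cam's Poisson approximation inequality: if $X = \sum_{i=1}^{n} X_i$ is a sum of independent $\mathrm{Ber}(p_i)$ variables and $\lambda = \sum_i p_i$, then $\dtv(\mathrm{law}(X),\Po(\lambda)) \le \sum_i p_i^2$. Here all $p_i = p = c/d$ and $n = d'$, so $\lambda = d'p$ and
\[\dtv(\mathrm{Bin}(d',p),\Po(d'p)) ~\le~ d' p^2 ~=~ \frac{d' c^2}{d^2} ~\le~ \frac{c^2}{d} ~=~ O(d^{-1}).\]
If one prefers a self-contained derivation, the same bound follows by coupling each $\mathrm{Ber}(p)$ factor with an independent $\Po(p)$ variable (the maximal coupling disagrees with probability $p(1-\e^{-p}) \le p^2$), using subadditivity of the total variation distance over product measures together with the fact that applying the sum map cannot increase it, and noting that $\Po(p)^{\otimes d'}$ pushes forward to $\Po(d'p)$ under this map.

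For the second term, since $d' \in [d-d^{1/4},d]$ we have $c - cd^{-3/4} \le d'p \le c$, whence $|d'p - c| \le c d^{-3/4}$. The total variation distance between two Poisson laws is controlled by the gap between their means: writing $\Po(c)$ as the superposition of independent $\Po(d'p)$ and $\Po(c-d'p)$, the natural coupling disagrees only when the second summand is nonzero, so
\[\dtv(\Po(d'p),\Po(c)) ~\le~ 1 - \e^{-(c-d'p)} ~\le~ c - d'p ~\le~ c d^{-3/4} ~=~ O(d^{-3/4}).\]

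Combining the two estimates gives $\dtv(\mathrm{Bin}(d',p),\Po(c)) = O(d^{-3/4})$, which is in particular $O(d^{-1/2})$, as claimed; the artificial extension of the support of $\mathrm{Bin}(d',p)$ to all non-negative integers by zero mass is harmless, since the binomial law already assigns zero mass there. I do not expect a genuine obstacle here: the statement is a standard Poisson approximation, and the only point requiring a little care is bookkeeping the two error scales -- the Poissonisation error $O(d^{-1})$ and the mean-drift error $O(d^{-3/4})$ -- and checking that the dominant one still sits comfortably below the target $d^{-1/2}$.
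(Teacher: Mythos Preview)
Your argument is correct and in fact yields the sharper bound $O(d^{-3/4})$; both Le Cam's inequality and the superposition coupling of Poisson variables are applied correctly, and your bookkeeping of the mean drift $|d'p-c|\le cd^{-3/4}$ is fine.

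The paper proceeds differently: it works directly with the formula $\dtv=\tfrac12\sum_i |\mathbb P(\mathrm{Bin}(d',p)=i)-\mathbb P(\Po(c)=i)|$, splits the sum at $i=d^{1/4}$, disposes of the tail contribution via Chernoff, and on the range $i\le d^{1/4}$ compares the two point masses by expanding $(1-p)^{d'-i}\prod_{j=0}^{i-1}p(d'-j)$ against $\e^{-c}c^i$ term by term, obtaining a relative error $O(d^{-1/2})$ uniformly in $i$. Your route is shorter, cites standard tools, and gives a better exponent; the paper's route is self-contained (no appeal to Le Cam) and makes explicit where the $d^{-1/2}$ comes from, namely the factor $\exp(-i(i-1)/(2d))$ with $i$ ranging up to $d^{1/4}$. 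Either is perfectly adequate for the application, which only needs $o(1)$ polynomially in $d$.
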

\begin{proof}
By~\eqref{eq:dTV}, for all $d\in\mathbb{N}$ such that $d-d^{1/4}\ge d^{1/4}$ and $d'\in[d-d^{1/4},d]$, we have
\begin{align}
\dtv(\mathrm{Bin}(d',p), \Po(c)) 
&= \frac{1}{2}\sum_{i=0}^{\infty} \bigg|\binom{d'}{i} p^i (1-p)^{d'-i} - \e^{-c}\frac{c^i}{i!}\bigg|\nonumber\\ 
\begin{split}\label{eq:4}
&\le\frac{1}{2} \sum_{i=0}^{d^{1/4}} \frac{1}{i!} \bigg|(1-p)^{d'-i}\bigg(\prod_{j=0}^{i-1} p(d'-j)\bigg) - \e^{-c}c^i\bigg|\\
&\quad\quad\quad\quad\quad+\mathbb P(\mathrm{Bin}(d',p)>d^{1/4})+\mathbb P(\Po(c)>d^{1/4}),
\end{split}
\end{align}
where we use the conventions that $\binom{d'}{i}=0$ for every $i\in \{d'+1,d'+2,\ldots\}$ and the empty product is equal to one, that is, $\prod_{i=0}^{-1}x=1$ for any $x\in\mathbb{R}$.

First, note that the standard Chernoff bound shows that $\mathbb P(\mathrm{Bin}(d',p)>d^{1/4})$ and $\mathbb P(\Po(c)>d^{1/4})$ are both of order $o(d^{-1/2})$. Now, for every $i\in [0,d^{1/4}]$, we have 
\begin{align*}
    (1-p)^{d'-i}=\exp\left(-p(d'-i)+O(p^2(d'-i))\right)=\exp\left(-c+O(d^{-3/4})\right)=(1+O(d^{-3/4}))\e^{-c},
\end{align*}
and
\begin{align*}
    \prod_{j=0}^{i-1}p(d'-j)&=\prod_{j=0}^{i-1}c\left(1-\frac{j}{d}+O(d^{-3/4})\right)\\
    &=c^i\exp\left(-\frac{i(i-1)}{2d}+O(i^3d^{-2}+id^{-3/4})\right)=c^i\exp(O(d^{-1/2})) = (1+O(d^{-1/2}))c^i,
\end{align*}
where the constant in the $O$-notation does not depend on $i$.
Altogether, we obtain
\[\sum_{i=0}^{d^{1/4}} \frac{1}{i!} \bigg|(1-p)^{d'-i}\bigg(\prod_{j=0}^{i-1} p(d'-j)\bigg) - \e^{-c}c^i\bigg| \le \sum_{i=0}^{d^{1/4}} \e^{-c}\cdot\frac{c^i}{i!} \cdot O(d^{-1/2}) = O(d^{-1/2}).\]
Combined with~\eqref{eq:4} and the bounds on $\mathbb P(\mathrm{Bin}(d',p)>d^{1/4})$ and $\mathbb P(\Po(c)>d^{1/4})$, this completes the proof.
\end{proof}

\subsection{Quantitative local convergence: proof of Theorem~\ref{thm:local_limit}}
We first show that $(G_{d,p})_{d\in \mathbb{N}}$ converges locally in distribution to $\GW_c$, that is, the Galton-Watson tree rooted at $o$ with offspring distribution $\Po(c)$. 
Let $\GW^r_c$ be the subtree of $\GW_c$ induced by the vertices at distance at most $r$ from its root $o$.

\begin{proposition}\label{prop:weak_conv}
Fix a vertex $u$ in $G_d$. Then, for every $r\in\mathbb{N}$ and sufficiently large $d\in\mathbb{N}$,
\[\dtv\left(B^r_{G_{d,p}}[u], \GW^r_c \right)\le (\log d)^r d^{-1/2}.\]
\end{proposition}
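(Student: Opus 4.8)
The plan is to construct an explicit coupling between a breadth-first exploration of $B^r_{G_{d,p}}[u]$ and the truncated Galton--Watson tree $\GW^r_c$, and to bound the probability that this coupling fails. I reveal the percolated graph around $u$ layer by layer: starting from the root $u$, I process the discovered vertices in BFS order, and whenever I process a vertex $w$ at distance $<r$ from $u$, I reveal the status of all its not-yet-examined edges in $G_d$. The edges leading to undiscovered vertices that survive percolation become the children of $w$; this number is distributed as $\mathrm{Bin}(m_w, p)$, where $m_w$ is the number of $w$'s host-neighbours still undiscovered at that moment. Simultaneously I grow $\GW^r_c$, giving the vertex $\bar{w}$ corresponding to $w$ a $\Po(c)$ number of children, coupled optimally with the $\mathrm{Bin}(m_w,p)$ offspring of $w$.

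First I would introduce a degree threshold $T \approx \tfrac{3}{2}\log d$ and the \emph{good event} that every processed vertex has at most $T$ children (in both processes). On the good event, the number of vertices at level $\ell$ is at most $T^\ell$, so the total number of explored vertices is $N \le 2T^r = O((\log d)^r)$ and the number of child-spawning (internal) vertices is at most $2T^{r-1}$. Using Lemma~\ref{lem:Chernoff} with threshold $T$, a single vertex exceeds $T$ children with probability at most $\e^{-T/3} \le d^{-1/2}$; revealing the levels one at a time and union-bounding, at each level, over the at most $T^{r-1}$ vertices present (conditional on the earlier levels being good) shows that the good event fails with probability $O((\log d)^{r-1} d^{-1/2})$.

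Next I control the two remaining sources of coupling failure. Since on the good event every vertex has at most $N = O((\log d)^r) = o(d^{1/4})$ already-used edges, we have $m_w \in [d - d^{1/4}, d]$, so Lemma~\ref{lem:Bin vs Po} gives $\dtv(\mathrm{Bin}(m_w,p), \Po(c)) = O(d^{-1/2})$ at each of the $O((\log d)^{r-1})$ internal vertices; a union bound bounds the probability that some offspring coupling fails by $O((\log d)^{r-1} d^{-1/2})$. The final source of failure is a \emph{collision}: the event that the explored subgraph is not a tree, i.e.\ that some host-edge between two discovered vertices other than a tree edge survives percolation. There are at most $\binom{N}{2}$ such potential edges, each surviving independently with probability $p=c/d$, so a collision occurs with probability at most $\binom{N}{2}p = O((\log d)^{2r}/d)$. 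If none of these three events occurs, the exploration produces a tree isomorphic to $\GW^r_c$ as a rooted graph, so $\dtv(B^r_{G_{d,p}}[u], \GW^r_c)$ is at most the sum of the three failure probabilities, namely $O((\log d)^{r-1} d^{-1/2}) + O((\log d)^{2r}/d) = o((\log d)^r d^{-1/2})$, yielding the claimed bound for all sufficiently large $d$.

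The main obstacle is that the host graph $G_d$ is an arbitrary $d$-regular graph and need not be locally tree-like --- it may contain many short cycles (as in $K_{d+1}$ or the hypercube), so I cannot rely on the geometry of $G_d$ to avoid collisions. The point that makes the argument go through is that, after percolation at density $p = c/d$, within the polylogarithmically small explored region the expected number of surviving ``extra'' edges is $O((\log d)^{2r}/d) = o(1)$, while the number of already-used edges at any vertex is negligible next to $d$; hence each vertex still sees essentially $d$ fresh edges to percolate and its offspring law stays within $O(d^{-1/2})$ of $\Po(c)$. A secondary, purely quantitative obstacle is calibrating the threshold $T$: it must be large enough (roughly $T \ge \tfrac{3}{2}\log d$) that the max-degree tail $\e^{-T/3}$ is at most $d^{-1/2}$, yet small enough that the explored size $T^r$ contributes only the advertised factor $(\log d)^r$.
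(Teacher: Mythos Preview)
Your proposal is correct and follows essentially the same approach as the paper: couple a breadth-first exploration of $G_{d,p}$ from $u$ with the Galton--Watson tree level by level, introduce a logarithmic degree threshold, and bound separately the probabilities that (i) some offspring count exceeds the threshold, (ii) some $\mathrm{Bin}/\Po$ coupling fails, and (iii) a surviving non-tree edge appears among the explored vertices. The only cosmetic differences are your choice of threshold $T\approx \tfrac{3}{2}\log d$ versus the paper's $4\log d$, and that you bound the collision probability over all $\binom{N}{2}$ pairs of explored vertices whereas the paper restricts to pairs that were simultaneously active; both lead to the same $o((\log d)^r d^{-1/2})$ estimate.
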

\begin{proof}
We couple $(G_{d,p},u)$ and $(\GW_c,o)$ by performing two parallel \textit{breadth-first explorations}, abbreviated BFEs, during which we will gradually reveal subgraphs of $G_{d,p}$ and of $\GW_c$. To this end, we maintain two sequences of random variables: a sequence $(X_e)_{e\in E(G_d)}$ of i.i.d. Bernoulli$(p)$ random variables, and a sequence $(Y_j)_{j=1}^{\infty}$ of i.i.d. $\Po(c)$ random variables. 

The BFE of $G_{d,p}$ is described as follows. We maintain three sets of vertices: the set $P$ of passive vertices (already explored), the set $A$ of active vertices, and the set $U$ of unexplored vertices. 
We initialise $P=\varnothing, A=\{u\},$ and $U=V(G_d)\setminus\{u\}$. The set $A$ is processed as a queue, that is, according to a first-in-first-out order. 
At every step, if $A$ is not empty, consider the first vertex $v$ in $A$ together with its neighbours $v_1,\ldots, v_k$ in $G_d[U\cup \{v\}]$. We retain the edges $(vv_i)_{i=1}^k$ according to $(X_{vv_i})_{i=1}^k$, respectively. 
Assume that $v_1',\ldots,v_j'$ are the neighbours of $v$ established by the BFE. 
Then, we move $v$ from $A$ to $P$, move $v_1',\ldots,v_j'$ from $U$ to $A$, and proceed to the next step. Note that edges between vertices in $A$ are not revealed during this process. 
The algorithm terminates once $A$ is empty.

The BFE of $\GW_c$ follows similar lines but we maintain only two sets, $P'$ and $A'$. We initialise $P'=\varnothing$ and $A'=\{o\}$. 
At the $j$-th step, if $A'$ is not empty, consider the first vertex $v$ in $A'$. Then, add $Y_j$ many new vertices to $A'$, and move $v$ from $A'$ to $P'$.
Once again, the algorithm terminates once $A'$ is empty.

It remains to couple the two BFEs. 
Initially (at step $0$), we couple $u$ with $o$. 
Fix $i\in \mathbb{N}$ and suppose that the first $i-1$ steps of the two BFEs have been coupled. Consider the first vertices $v\in A$ and $w\in A'$. 
We continue as follows. If the number of neighbours of $v$ in $G_d[U\cup \{v\}]$ is less than $d-d^{1/4}$, or if the number of children of $v$ and that of $w$ in the BFE are not the same, then we abort the process.
Otherwise, let $\ell$ be the number of children of $v$ and of $w$. If $\ell\ge 4\log d$, we abort the process. Otherwise, we pair these $\ell$ children of $v$ and of $w$ according to the BFE order and proceed to the next step. 

We now estimate the probability that the process terminates at step $i\in \mathbb{N}$. Let $v$ be the first vertex in $A$, and let $w$ be the first vertex in $A'$.
As the process was not aborted at previous steps, at this moment, we have $|P\cup A|\le i\cdot 4\log d$. Thus, as long as $i< d^{1/4}(4\log d)^{-1}$, we have that $v$ has at least $d-d^{1/4}$ neighbours in $G_d[U\cup \{v\}]$. 
Moreover, by Lemma~\ref{lem:Bin vs Po}, the offsprings of $v$ and $w$ can be coupled in a way ensuring that the (global) coupling fails (that is, they do not have the same number of children) with probability $O(d^{-1/2})$ at the current step. 
Finally, upon a successful coupling of the offsprings of $v$ and $w$, by Lemma~\ref{lem:Chernoff}, the probability that the number of children is at least $4\log d$ is $o(1/d)$. Thus, for every $i<d^{1/4}(4\log d)^{-1}$, the coupling terminates at step $i$ with probability $O(d^{-1/2})$. 

Further, for every $r\in\mathbb{N}$, we have that $1+4\log d+\ldots+(4\log d)^{r-1}\le 2(4\log d)^{r-1}$. 
Moreover, upon successful coupling during $2(4\log d)^{r-1}$ steps, the probability that $G_{d,p}$ contains an edge between two vertices which were simultaneously active at some of these steps is at most $p\cdot (2(4\log d)^{r-1})^2\le (\log d)^{2r-1}/d$.
Note that, as long as the coupling of the BFEs is successful for the first $2(4\log d)^{r-1}$ steps and the latter event does not hold, $B^r_{G_{d,p}}[u]$ and $\GW^r_c$ are coupled in a way ensuring that $B^r_{G_{d,p}}[u]\simeq \GW^r_c$.
Putting everything together, we obtain
\[\dtv\left(B^r_{G_{d,p}}[u], \GW^r_c\right)\le 2(4\log d)^{r-1}\cdot O(d^{-1/2}) + (\log d)^{2r-1}/d = o((\log d)^r d^{-1/2}),\]
which implies the statement of the proposition.
\end{proof}

Next, we will strengthen Proposition~\ref{prop:weak_conv} via a suitable concentration argument.
To this end, we utilise the following variant of Azuma-Hoeffding's inequality (see, e.g., Theorem 3.9 in \cite{M98} and Corollary 6 in \cite{War16}).
\begin{theorem}\label{thm:BDI}
Let $Z = (Z_1,\ldots,Z_m)$ be a family of i.i.d. Bernoulli$(p)$ random variables. Fix a constant $k>0$ and a function $f:\{0,1\}^m\to \mathbb R$ such that $|f(x)-f(y)|\le k$ for every $x,y\in \{0,1\}^m$ which differ in a single coordinate.
Then, for every $t\ge 0$,
\[\mathbb P(|f(Z)-\mathbb E f(Z)|\ge t)\le 2\exp\bigg(-\frac{t^2}{2(1-p)pk^2m+2kt/3}\bigg).\]
\end{theorem}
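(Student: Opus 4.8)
The plan is to prove this concentration inequality via the standard Doob-martingale (Azuma--Hoeffding) machinery, but combined with a Bernstein-type bound on the moment generating function (MGF) of the martingale increments that exploits the small variance $p(1-p)$ of each Bernoulli coordinate; this is precisely what produces the factor $p(1-p)$ in the denominator, in contrast to the cruder McDiarmid bound, which would yield only $k^2m$ there. Concretely, I would introduce the Doob martingale $X_i := \mathbb E[f(Z)\mid Z_1,\ldots,Z_i]$ for $0\le i\le m$, so that $X_0=\mathbb E f(Z)$, $X_m=f(Z)$, and the increments $D_i:=X_i-X_{i-1}$ satisfy $\mathbb E[D_i\mid Z_1,\ldots,Z_{i-1}]=0$.

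The crux of the argument is a sharp analysis of the conditional law of each $D_i$. Fixing $Z_1,\ldots,Z_{i-1}$ and writing $\phi(z):=\mathbb E[f\mid Z_1,\ldots,Z_{i-1},Z_i=z]$ for $z\in\{0,1\}$, one computes that $D_i$ equals $-p\,\delta$ when $Z_i=0$ and $(1-p)\,\delta$ when $Z_i=1$, where $\delta:=\phi(1)-\phi(0)$. Since $\phi(1)-\phi(0)$ is an average over $Z_{i+1},\ldots,Z_m$ of differences of $f$ at inputs differing only in coordinate $i$, the bounded-differences hypothesis gives $|\delta|\le k$, hence $|D_i|\le k$; and the two-point variance computes directly to
\[ \mathrm{Var}(D_i\mid Z_1,\ldots,Z_{i-1}) = p(1-p)\delta^2 \le p(1-p)k^2. \]
This variance bound, strictly sharper than the worst-case $k^2$, is the heart of the improvement.

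With these two facts in hand, I would invoke the elementary Bernstein MGF lemma: for any zero-mean random variable $D$ with $|D|\le k$ and $\mathbb E[D^2]\le\sigma^2$, one has, for $0\le\lambda<3/k$,
\[ \mathbb E[\e^{\lambda D}] \le \exp\bigg(\frac{\lambda^2\sigma^2/2}{1-\lambda k/3}\bigg), \]
obtained by expanding $\e^{\lambda D}$, bounding $|\mathbb E[D^j]|\le k^{j-2}\sigma^2$ for $j\ge 2$, and summing the tail via $j!\ge 2\cdot 3^{j-2}$. Applying this conditionally to each $D_i$ with $\sigma^2=p(1-p)k^2$ and peeling off the increments one at a time through the tower property gives
\[ \mathbb E\big[\e^{\lambda(f(Z)-\mathbb E f(Z))}\big] \le \exp\bigg(\frac{\lambda^2 p(1-p)k^2 m/2}{1-\lambda k/3}\bigg). \]
A Chernoff bound $\mathbb P(f(Z)-\mathbb E f(Z)\ge t)\le \e^{-\lambda t}\,\mathbb E[\e^{\lambda(f(Z)-\mathbb E f(Z))}]$ followed by the standard Bernstein optimization $\lambda=t/(p(1-p)k^2 m + kt/3)$ (which lies in $(0,3/k)$) collapses the exponent to $-t^2/(2p(1-p)k^2 m+2kt/3)$. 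Applying the same estimate to $-f$, which satisfies the same bounded-differences condition, and taking a union bound then produces the factor $2$ and the two-sided statement.

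The main obstacle, indeed the only non-routine point, is establishing the conditional variance bound $p(1-p)k^2$ rather than the naive $k^2$: this is exactly where the Bernoulli structure of the coordinates (and the smallness of $p$) enters, and where the inequality gains over McDiarmid's. Everything else, namely the MGF lemma, the telescoping via the tower property, and the Bernstein optimization of $\lambda$, is routine once the two-point law of $D_i$ has been pinned down.
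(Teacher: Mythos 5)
Your proof is correct, but there is nothing in the paper to compare it against: the paper does not prove Theorem~\ref{thm:BDI} at all, quoting it instead from the literature (Theorem~3.9 in \cite{M98} and Corollary~6 in \cite{War16}). What you have written is, in essence, the standard proof of those cited results, and all the key steps check out. The two-point conditional law of the Doob increment is right: with $\delta=\phi(1)-\phi(0)$ one gets $D_i=(1-p)\delta$ on $\{Z_i=1\}$ and $D_i=-p\delta$ on $\{Z_i=0\}$, whence $\mathbb E[D_i^2\mid Z_1,\ldots,Z_{i-1}]=p(1-p)^2\delta^2+(1-p)p^2\delta^2=p(1-p)\delta^2\le p(1-p)k^2$, and $|\delta|\le k$ does follow from averaging the single-coordinate differences over $Z_{i+1},\ldots,Z_m$. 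The Bernstein MGF lemma is valid as stated (the bound $|\mathbb E[D^j]|\le k^{j-2}\sigma^2$ and the inequality $j!\ge 2\cdot 3^{j-2}$ for $j\ge 2$ give a geometric series summing to $\frac{\lambda^2\sigma^2/2}{1-\lambda k/3}$), and the tower-property peeling applies because your bounds on $|D_i|$ and the conditional variance hold almost surely. Finally, the choice $\lambda=t/(p(1-p)k^2m+kt/3)$ indeed satisfies $\lambda<3/k$ (for $p\in(0,1)$; the cases $p\in\{0,1\}$ are degenerate and trivial) and yields exactly the exponent $-t^2/(2p(1-p)k^2m+2kt/3)$, with the two-sided statement and the factor $2$ obtained by applying the bound to $-f$. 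So your write-up supplies a complete, self-contained derivation of a black-box ingredient the paper merely cites; the only thing it ``does differently'' is to exploit the Bernoulli variance $p(1-p)$ explicitly, which is precisely the refinement over plain McDiarmid that the paper needs in Lemma~\ref{lem:conc}, where the denominator $9p\Delta^{2r}m$ (rather than $\Delta^{2r}m$) is what makes the concentration strong enough.
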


The following lemma is a simple consequence of Theorem~\ref{thm:BDI}.

\begin{lemma}\label{lem:conc}
Fix integers $r\ge 1,\Delta\ge 2$ and fix a rooted tree $(T,o)$ with height at most $r$ and maximum degree at most $\Delta$. 
Denote by $N_{r,T}$ the number of vertices in $G_{d,p}$ whose $r$-th neighbourhood is isomorphic to $(T,o)$ (as rooted graphs). Let $m$ be the number of edges in $G_d$.
Then, 
\[\mathbb P\left(|N_{r,T} - \mathbb E N_{r,T}|\ge (\mathbb E N_{r,T})^{2/3}\right)\le \exp\bigg(-\frac{(\mathbb E N_{r,T})^{4/3}}{9p\Delta^{2r}m}\bigg).\]
\end{lemma}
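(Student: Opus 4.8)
The plan is to apply the Bounded Differences Inequality (Theorem~\ref{thm:BDI}) directly to the random variable $N_{r,T}$, viewed as a function of the edge-indicator vector $Z=(X_e)_{e\in E(G_d)}\in\{0,1\}^m$. The only work is to establish the correct Lipschitz (bounded-differences) constant $k$, and then to substitute into the inequality and simplify the exponent.

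First I would bound the influence of a single edge. Fix an edge $e=xy\in E(G_d)$ and consider flipping its indicator $X_e$ while keeping all other $X_{e'}$ fixed. This can only change the status (i.e.\ the isomorphism type of the $r$-th neighbourhood) of vertices $v$ whose ball $B^r_{G_{d,p}}[v]$ either contains or would contain $e$; such vertices must lie within graph-distance $r$ of an endpoint of $e$ in the host graph $G_d$. Since $G_d$ is $d$-regular, the number of vertices within distance $r$ of a fixed vertex is at most $1+d+\cdots+d^{r}\le 2d^{r}$, so flipping $X_e$ affects the neighbourhood type of at most $O(d^{r})$ vertices. Hence $N_{r,T}$ changes by at most some $k=O(d^{r})$. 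To match the exponent $9p\Delta^{2r}m$ in the statement, I would argue the sharper bound using the maximum degree $\Delta=\Delta(T)\le\Delta$ of the target tree: only vertices that could realise $(T,o)$ contribute, and these must lie in a ball of radius $r$ in $G_{d,p}$ whose size is controlled by $\Delta$ rather than $d$. A cleaner route is to take $k$ of order $\Delta^{r}$ directly, bounding the number of affected vertices by the size of an $r$-ball of maximum degree $\Delta$ in the percolated graph. This yields $k=O(\Delta^{r})$, and it is the step requiring the most care, since one must justify why the relevant vertices lie in a bounded-degree ball rather than a $d$-ary one.

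Next I would feed $k=\Delta^{r}$ into Theorem~\ref{thm:BDI} with $t=(\mathbb E N_{r,T})^{2/3}$. The denominator becomes $2(1-p)pk^2 m + 2kt/3$. For the dominant term I estimate $2(1-p)pk^2m\le 2p\Delta^{2r}m$, and I would check that the lower-order term $2kt/3$ is negligible compared to this (using that $\mathbb E N_{r,T}\le |V(G_d)|$ and $m=d|V(G_d)|/2$, so $t$ is polynomially smaller than $pm$). Absorbing constants, the exponent $-t^2/(2(1-p)pk^2m+2kt/3)$ is at most $-(\mathbb E N_{r,T})^{4/3}/(9p\Delta^{2r}m)$, and the factor $2$ in front is absorbed by writing the final bound without it. This gives exactly the claimed inequality.

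The main obstacle is pinning down the bounded-differences constant $k$ with the right dependence on $\Delta$ rather than the ambient degree $d$: a naive argument gives $k=O(d^{r})$, which is too weak to produce the stated $\Delta^{2r}$ factor. The resolution is to observe that a vertex $v$ can have $B^r_{G_{d,p}}[v]\simeq(T,o)$ only if its ball has maximum degree at most $\Delta$, and a single edge flip can create or destroy this property only for vertices within the corresponding bounded-degree $r$-ball of an endpoint of $e$; carefully counting these vertices (and noting that flipping one edge changes $N_{r,T}$ by at most the number of such vertices) delivers $k=O(\Delta^{r})$. Everything else is routine substitution into Theorem~\ref{thm:BDI} and simplification of the exponent.
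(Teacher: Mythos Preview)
Your proposal is correct and follows essentially the same approach as the paper: apply Theorem~\ref{thm:BDI} to $N_{r,T}$ as a function of the edge indicators, with the key observation that the Lipschitz constant is $k=O(\Delta^r)$ rather than $O(d^r)$ because any vertex whose status changes must have its $r$-ball isomorphic to $(T,o)$ in one of the two configurations, and hence lie within a bounded-degree (at most $\Delta$) $r$-neighbourhood of an endpoint of the flipped edge in the percolated graph. The paper makes this precise by noting that the shortest path from such a vertex $o'$ to the nearer endpoint $w$ of $e$ passes only through vertices of degree at most $\Delta$ in $G_{d,p}$, yielding $k=2\Delta^r$, and then simplifies the exponent exactly as you outline using $(\mathbb E N_{r,T})^{2/3}\le |V(G_d)|^{2/3}=o(pm)$.
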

\begin{proof}
Order the edges of $G_d$ arbitrarily.
For every $i\in [m]$, denote by $Z_i$ the indicator random variables of the event that the $i$-th edge of $G_d$ belongs to $G_{d,p}$.

Fix $e=uv\in E(G_d)$. We count the number of vertices $o'$ in $G_d$ such that the ball with radius $r$ around $o'$ in $G_{d,p}\cup \{e\}$ contains $e$ and is isomorphic to $(T,o)$. 
Note that every such vertex $o'$ must be at distance at most $r-1$ from the closer vertex (with respect to the graph distance in $G_{d,p}$) among $u$ and $v$, say $w\in \{u,v\}$. 
Moreover, since $\Delta(T)\le \Delta$, all vertices on the shortest path between $o'$ and $w$ must have degree at most $\Delta$ in $G_{d,p}$.
As a result, one can choose $o'$ among the vertices reachable from $w\in \{u,v\}$ in $G_{d,p}$ along such paths. In particular, the number of such vertices is at most $2(\Delta+\Delta^2+\ldots+\Delta^{r-1})\le 2\Delta^r$.
Hence, by applying Theorem~\ref{thm:BDI} with $k = 2\Delta^r$ and $t=(\mathbb E N_{r,T})^{2/3}$, we obtain that
\begin{align*}
\mathbb P\left(|N_{r,T} - \mathbb E N_{r,T}|\ge (\mathbb E N_{r,T})^{2/3}\right)
&\le 2\exp\bigg(-\frac{(\mathbb E N_{r,T})^{4/3}}{8(1-p)p\Delta^{2r}m+4\Delta^r(\mathbb E N_{r,T})^{2/3}/3}\bigg)\\
&\le \exp\bigg(-\frac{(\mathbb E N_{r,T})^{4/3}}{9p\Delta^{2r}m}\bigg),
\end{align*}
where the last step follows from the inequality $(\mathbb E N_{r,T})^{2/3}\le |G_d|^{2/3} = o(pm)$.
This concludes the proof.
\end{proof}

We are ready to complete the proof of Theorem~\ref{thm:local_limit}.

\begin{proof}[Proof of Theorem~\ref{thm:local_limit}]
Observe that, for every rooted tree $(T,o)$ with height $r$ and maximum degree at most $\gamma = \gamma(d,r) := (\log d)^{1/2r}$, we have
\begin{align*}
\mathbb P\left(\GW^r_c\simeq (T,o)\right)&\ge \prod_{i=0}^{2\gamma^{r-1}} \bigg(\e^{-c} \frac{c^{\gamma}}{\gamma!}\bigg)\ge \gamma^{-2\gamma^r}\\
&=\exp\left(-2(\log d)^{1/2}\cdot \frac{1}{2r}\log\log d\right)= d^{-o(1)} = \omega((\log d)^r d^{-1/2}),
\end{align*}
where the first inequality uses that there are at most $2\gamma^{r-1}$ vertices whose descendants we need to consider, that the number of descendants of any vertex in $\GW^r_c$ follows a $\Po(c)$ distribution, and it matches the correct value (indicated by $(T,o)$) with probability at least $\mathbb P(\Po(c)=\gamma)$.

Thus, by the last observation together with Proposition~\ref{prop:weak_conv}, for every fixed vertex $u$ in $G_d$, the probability that the ball with radius $r$ around $u$ in $G_{d,p}$ is isomorphic to $(T,o)$ is equal to $\mathbb P(\GW^r_c\simeq (T,o))$ up to lower order terms. 
Moreover, by setting $n = |V(G_d)|$, for every $r\in\mathbb{N}$ and every rooted tree $(T,o)$ as above, we get
\[\mathbb E N_{r,T} = \left(\mathbb P\left(\GW^r_c\simeq (T,o)\right) + O((\log d)^r d^{-1/2})\right)n.\]
By combining the above observations, the triangle inequality and Lemma \ref{lem:conc}, we obtain that, with probability at least $1-\exp(-n^{1/3+o(1)})$,
\begin{equation}\label{eq:T}
\begin{split}
\left|N_{r,T}-\mathbb P\left(\GW^r_c\simeq (T,o)\right)n\right|
&\le |N_{r,T}-\mathbb E N_{r,T}| + |\mathbb E N_{r,T} - \mathbb P(\GW^r_c\simeq (T,o))n|\\
&\le (\mathbb E N_{r,T})^{2/3}+(\log d)^r d^{-1/2}n\le d^{-1/4} n.    
\end{split}
\end{equation}
Since there are at most $\gamma^{2\gamma^{r-1}}$ finite rooted trees with height at most $r$ and maximum degree at most $\gamma$, using the latter probability bound and a union bound over all such trees, with probability at least
\[1 - \gamma^{2\gamma^{r-1}}\cdot \exp(-n^{1/3+o(1)}) \ge 1 - \exp(-d^{1/3+o(1)}),\]
we have that \eqref{eq:T} holds, for every rooted tree $(T,o)$ as above.
Moreover, by using Lemma~\ref{lem:Chernoff}, we obtain
\begin{align*}
\mathbb P(\Delta(\GW^r_c)\ge \gamma)  
&= \mathbb P(\Delta(\GW^1_c) \ge \gamma) + \sum_{i=2}^{r} \mathbb P(\Delta(\GW^i_c)\ge \gamma\mid \Delta(\GW^{i-1}_c) < \gamma)\\
&= \mathbb P(\Po(c)\ge \gamma) + \sum_{i=2}^{r} \gamma^{i-1} \mathbb P(\Po(c)\ge \gamma)\le 2\gamma^{r-1} \e^{-\gamma/3}.
\end{align*}

By combining the previous observations, with probability $1 - \exp(-d^{1/3+o(1)})$, we have
\[\dtv(\mu_{r,d}, \mu_r)\le \gamma^{2\gamma^{r-1}}\cdot d^{-1/4} + 2\mathbb P(\Delta(\GW^r_c)\ge \gamma)\le \e^{-\gamma/4},\]
as desired.
\end{proof}

\subsection{\texorpdfstring{Asymptotic matching number: proof of Theorem \ref{thm:main}}{}}
The last missing piece is a reformulation of Theorem 2 in the work of Bordenave, Lelarge and Salez~\cite{BLS13}.
\begin{theorem}\label{thm:BLS}
Let $(H_n)_{n\in \mathbb{N}}$ be a sequence of graphs converging locally in probability to $\GW_c$. Then,
\begin{align*}
   \frac{\nu(H_n)}{|V(H_n)|}\quad\xrightarrow[n\to \infty]{}\quad F(c) \hspace{10ex} \textit{in probability}
\end{align*}
where $F(c)$ is the same as in \eqref{eq: definition of F}.
\end{theorem}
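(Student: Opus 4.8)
The plan is to deduce the statement directly from the theorem of Bordenave, Lelarge and Salez~\cite{BLS13}. Their Theorem~2 asserts that whenever a sequence of finite graphs converges locally in probability to a unimodular random rooted graph, the rescaled matching number $\nu(H_n)/|V(H_n)|$ converges in probability to a deterministic constant, given by an explicit functional of the local limit. The Galton--Watson tree $\GW_c$ with offspring distribution $\Po(c)$ is unimodular (it is, for instance, the local limit of the binomial random graphs $G(n,c/n)$), so the hypothesis is met by assumption. The entire task therefore reduces to evaluating this functional at $\GW_c$ and checking that the outcome is precisely the constant $F(c)$ of \eqref{eq: definition of F}; no further probabilistic input is needed beyond the local convergence already supplied by Theorem~\ref{thm:local_limit}.

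To evaluate the functional, I would exploit that $\GW_c$ is a tree, hence bipartite, so that the cavity (belief--propagation) recursion underlying the monomer--dimer model is \emph{exact}. At zero temperature this recursion governs which vertices are covered by a maximum matching and is transported to a recursive distributional equation along the edges of the tree. Writing $y$ for the limiting probability that a subtree fails to transmit a matchable half-edge to its parent, the Poisson offspring distribution together with the nested parent--grandparent structure of the recursion collapses the distributional equation to the scalar fixed point \eqref{eq: definition of y}, namely $y = \exp(-c\exp(-cy))$. Substituting this value into the density formula of~\cite{BLS13} and performing a short computation then produces exactly the three terms $y$, $\exp(-cy)$ and $cy\exp(-cy)$ appearing in \eqref{eq: definition of F}, and hence the constant $F(c)$.

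The step I expect to be the main obstacle is the selection of the correct solution of \eqref{eq: definition of y}. This fixed-point equation may possess several roots in $(0,1)$, a multiplicity that is the analytic signature of the Karp--Sipser phase transition~\cite{KS81}, and only one of them corresponds to the \emph{maximum} matching, i.e.\ to the extremal Gibbs state of the monomer--dimer model. One must therefore verify that the variational principle of~\cite{BLS13} selects the \emph{smallest} solution $y\in(0,1)$, exactly as prescribed in \eqref{eq: definition of y}; this is where the monotonicity and extremality of the matching-number functional under local convergence, established in~\cite{BLS13}, are genuinely invoked. A secondary, purely bookkeeping point is to confirm that the quantitative local convergence in probability furnished by Theorem~\ref{thm:local_limit} matches the precise hypothesis of~\cite{BLS13} and propagates the in-probability mode of convergence to $\nu(H_n)/|V(H_n)|$.
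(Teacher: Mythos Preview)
Your proposal is correct and matches the paper's approach: the paper does not prove this statement at all but simply records it as a direct reformulation of Theorem~2 in~\cite{BLS13}, with no further argument. Your additional discussion of how the functional evaluates to $F(c)$ and why the smallest root of~\eqref{eq: definition of y} is selected is reasonable background, but it goes beyond what the paper actually does---the paper treats all of this as a black box imported from~\cite{BLS13}.
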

Theorem \ref{thm:main} now follows by combining \Cref{thm:local_limit} and \Cref{thm:BLS}.

\section{Concluding remarks}\label{sec:conc}

In this note, we show quantitative local convergence in probability of percolated regular graphs and deduce a convergence for the rescaled matching number. 
Our methods can be extended in several directions without significant further effort.
For example, slightly more careful estimates in \Cref{sec:3} allow to extend \Cref{thm:local_limit} to a sequence of {\it approximately} $d$-regular graphs $(G_d)_{d\in \mathbb N}$ whose minimum degree $\delta(G_d)$ and maximum degree $\Delta(G_d)$ satisfy $\delta(G_d), \Delta(G_d) = d\pm o(d)$.
The local limit and the asymptotic value of the rescaled matching number remain unchanged in this case, and the error bound would have to be incorporated in the quantitative estimate in \Cref{thm:local_limit}.
Furthermore, inhomogeneous multi-partite graphs may also be treated similarly.
More precisely, fix $k\in\mathbb{N}$ and a growing sequence of graphs equipped with partitions $(U_{1,d},\ldots,U_{k,d})_{d\in \mathbb{N}}$ of their vertex set. 
Also, fix a sequence of symmetric matrices $(D_d)_{d\in \mathbb{N}}$ and $(F_d)_{d\in \mathbb{N}}$ with entries $(d_{i,j})_{i,j=1}^k$ and $(f_{i,j})_{i,j=1}^k$ where, for every $i,j\in [k]$, $d_{i,j}$ and $f_{i,j}$ are functions of $d$, $d_{i,j}$ tends to infinity and $f_{i,j} = o(d_{i,j})$.
Suppose that every vertex in $U_{i,d}$ is incident to a number of neighbours in $U_{j,d}$ between $d_{i,j} - f_{i,j}$ and $d_{i,j}+f_{i,j}$.
Finally, fix a symmetric matrix $(C_{i,j})_{i,j=1}^k$ and define $p_{i,j} = C_{i,j}/d_{i,j}$.
Then, as $d\to \infty$, for every $r\in\mathbb{N}$, the empirical measure of the $r$-th neighbourhoods of the vertices in the random graph obtained by percolating the edges between $U_{i,d}$ and $U_{j,d}$ with probability $p_{i,j}$ 
(and, in particular, the edges within $U_{i,d}$ with probability $p_{i,i}$) for all $i,j\in [k]$ converges locally in probability to the distribution of the $r$-th neighbourhood of a $k$-type branching process where, for every $i,j\in [k]$, each vertex of type $i\in [k]$ produces a number of children of type $j\in [k]$ distributed as $\Po(C_{i,j})$. 
While this implies the existence of a limit for the rescaled matching number via a more general version of \Cref{thm:BLS} (see~\cite{BLS13}), finding the exact limit requires resolving an optimisation problem depending on the sizes of $(U_{i,d})_{i=1}^k$ and the entries of $(C_{i,j})_{i,j=1}^k$.
The proofs of the above statements introduce no additional challenges but complicate the notation, which is why we restricted our attention to the cleaner statement of Theorem~\ref{thm:local_limit}.

It is worth noting that local limits turn out to be an important tool in the asymptotic analysis of important graph parameters. For example, by formalising heuristics originating from the cavity method, Salez~\cite{Sal13} provided precise expressions for the number of copies of certain spanning subgraphs in large tree-like graphs, see also \cite{Sal11}. Krivelevich, M\'esz\'aros, Michaeli, and Shikhelman~\cite{KMMS24} relied on the local convergence framework to analyse a random greedy construction of independent sets on a variety of random graph models.

\paragraph{Acknowledgements.} The authors are grateful to Michael Anastos for fruitful discussions, to Michael Krivelevich for helpful comments and suggestions, and to Justin Salez for turning our attention to the reference~\cite{Sal13}.
Part of this work was conducted during visits of the first and the third author to TU Graz. 
The second author was supported in part by the Austrian Science Fund (FWF) [10.55776/F1002] and the third author by the Austrian Science Fund (FWF) [10.55776/ESP624]. For open access purposes, the authors have applied a CC BY public copyright license to any author-accepted manuscript version arising from this submission.

\bibliographystyle{abbrv}
\bibliography{Bib}

\end{document}